\newtheorem{theorem}{Theorem}[section]
\newtheorem*{thm*}{Theorem}
\newtheorem{lem}[theorem]{Lemma}
\newtheorem{cor}[theorem]{Corollary}
\newtheorem*{cor*}{Corollary}
\newtheorem*{thmA}{Theorem A}
\theoremstyle{definition}
\newtheorem*{conj*}{Conjecture}
\numberwithin{equation}{section}
\newcommand{\vol}[1]{\text{vol}\left(#1\right)}
\newcommand{\diam}{\text{diam}}
\newcommand{\Rnkv}{\mathcal M(n,\kappa,v)}
\newcommand{\Lnkv}{\mathcal M_\infty(n,\kappa,v)}
\newcommand{\dsp}{\displaystyle}
\newcommand{\geod}[1]{[\,#1\,]}
\begin{document}


\title{Lipschitz-Volume rigidity on limit spaces with Ricci curvature bounded from below\footnote{{\it MSC}: 53C21, 53C24}}

\author{Nan Li}
\address{Department of Mathematics, The Penn State University, University Park, PA 16802}
\email{nul12@psu.edu, lilinanan@gmail.com}
\urladdr{\href{https://sites.google.com/site/mathnanli/}
{https://sites.google.com/site/mathnanli/}}

\author{Feng Wang}
\address{Department of Mathematics, School of Mathematical Science of Peking University, China}
\email{fengwang232@gmail.com}





\maketitle

\begin{abstract}
  We prove a Lipschitz-Volume rigidity theorem for the non-collapsed Gromov-Hausdorff limits of manifolds with Ricci curvature bounded from below. This is a counterpart of the Lipschitz-Volume rigidity in Alexandrov geometry.
\end{abstract}

\section*{Introduction}

Let $\Rnkv$ be the collection of $n$-dimensional Riemannian manifolds $M$ with Ricci curvature bounded from below by $-(n-1)\kappa$ and $\vol{B_1(p)}\ge v$ for all $p\in M$. By Cheeger-Gromov Compactness Theorem, $\Rnkv$ is pre-compact in the pointed Gromov-Hausdorff topology. Let $\Lnkv$ be the closure of $\Rnkv$.

We let $``\text{vol}"$ denote the $n$-dimensional Hausdorff measure with an appropriate normalization. It has been proved in \cite{Col97} and \cite{CC97-I} that $\vol{\cdot}$ is a continuous function over balls with respect to the Hausdorff distance. T. Colding \cite{Col96-2} proved that an $n$-dimensional Riemannian manifold with Ricci curvature $\ge(n-1)$ is Hausdorff close to the unit sphere $\mathbb S^n$ if and only if its volume is close to the volume of $\mathbb S^n$. In this paper, we prove the following Lipschitz-Volume rigidity theorem.

\begin{thmA}[LV-rigidity]
  Let $X, Y\in \Lnkv$. Suppose that there is a 1-Lipschitz map $f\colon X\to Y$. If $\vol X=\vol {f(X)}$, then $f$ is an isometry with respect to the intrinsic metrics of $X$ and $f(X)$. In particular, if $f$ is also onto, then $Y$ is isometric to $X$.
\end{thmA}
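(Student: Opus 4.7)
The plan is to combine the non-collapsed structure theory of Ricci limits with a blow-up argument at regular points, reducing Theorem~A to an infinitesimal rigidity statement, and then to integrate along curves to upgrade this to an intrinsic isometry. Since $f$ is $1$-Lipschitz, for every Borel $E\subseteq X$ one has $\vol{f(E)}\le\vol E$, so the pushforward satisfies $f_{\ast}(\text{vol}|_{X})\ge\text{vol}|_{f(X)}$. The hypothesis $\vol X=\vol{f(X)}$ then forces both (i) $f$ is injective off a set of measure zero, and (ii) $\vol{f(E)}=\vol E$ for every Borel $E\subseteq X$; in short, $f$ is measure preserving.

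Next I would invoke the Cheeger-Colding structure theory for $\Lnkv$: the regular set $\mathcal{R}(X)$, consisting of points whose tangent cone is isometric to $\mathbb{R}^{n}$, has full $\text{vol}$-measure, and its complement $\mathcal{S}(X)$ has Hausdorff codimension at least $2$; likewise for $Y$.  By (ii), $\mathcal{R}(X)\cap f^{-1}(\mathcal{R}(Y))$ still has full measure. Fix such an $x_{0}$ and rescale at scales $r_{i}\to 0$: the pointed spaces $(X,r_{i}^{-1}d_{X},x_{0})$ and $(Y,r_{i}^{-1}d_{Y},f(x_{0}))$ both converge to $\mathbb{R}^{n}$, and by Arzel\`a--Ascoli the rescaled maps subconverge to a $1$-Lipschitz $L\colon\mathbb{R}^{n}\to\mathbb{R}^{n}$.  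The volume-continuity theorem of Colding \cite{Col97,CC97-I} ensures that the measure preservation of $f$ descends to $L$; thus $L$ is a $1$-Lipschitz volume-preserving self-map of $\mathbb{R}^{n}$, which by Rademacher plus the pointwise constraints $\|DL\|\le 1$ and $|\det DL|=1$ a.e.\ must be a Euclidean isometry.  Hence every blow-up limit of $f$ at $\text{vol}$-a.e.\ point is an isometry.

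To promote this pointwise rigidity into an intrinsic isometry, I would show that $f$ preserves the length of every rectifiable curve in $X$; one direction, $\text{length}(f\circ\gamma)\le\text{length}(\gamma)$, is immediate from the $1$-Lipschitz property. Exploiting the codimension-$2$ bound, I would approximate any given curve by one meeting $\mathcal{S}(X)\cup f^{-1}(\mathcal{S}(Y))$ in a set of $1$-dimensional measure zero, and then use the isometric blow-up limits of the previous paragraph to conclude that the metric derivative of $f\circ\gamma$ equals $|\gamma'|$ almost everywhere along such a curve. Integration yields the matching length, and since the intrinsic distance is the infimum of lengths, $f\colon X\to f(X)$ becomes an isometry of intrinsic metrics.

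The hard part will be this final curve-integration step: the metric differential of $f$ along a specific curve is not automatically identical to the blow-up limits produced in the second paragraph, and one must argue that almost-everywhere infinitesimal isometry does translate into length preservation along generic curves. This plausibly requires a Fubini-type selection of curves through the regular set together with quantitative segment-inequality estimates in the spirit of Colding-Naber. A secondary issue, in the blow-up step, is verifying that the measure-preserving property of $f$ descends to the limit map $L$, which relies on the uniform volume convergence available in the non-collapsed setting.
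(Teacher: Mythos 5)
Your outline shares the paper's general philosophy (Cheeger--Colding structure theory, rigidity of $f$ on the regular part, then integration along curves), but it has a genuine gap at each of the two places where the paper does its real work. The most serious one is injectivity. From measure preservation you only get that $f$ is injective off a set of measure zero, and nothing later upgrades this. Injectivity is not a technicality here: if $f(p)=f(q)$ for even one pair $p\neq q$, then the intrinsic distance in $f(X)$ between $f(p)$ and $f(q)$ is $0<|pq|_X$ and the conclusion fails; moreover it cannot follow from a.e.\ or infinitesimal considerations, since a quotient map collapsing a lower-dimensional subset of $X$ to a point is $1$-Lipschitz, volume preserving, and a.e.\ injective (this is exactly the counterexample the introduction cites for general length spaces). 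The paper closes this with a global argument you do not have: if $f(p)=f(q)=a$, then geodesics from points of a positive-volume set $\Omega_0=f(B_\eta(p)\cap\mathcal R(X))$ to a nearby regular value $a_1=f(q_1)$ must all exit $f(\mathcal R_\epsilon(X))$, and the Dimension Comparison Lemma \ref{dim.comp} then forces $\dim_H\bigl(f(\mathcal S(X))\bigr)\ge n-1$, contradicting the codimension-two bound on $\mathcal S(X)$ (Lemmas \ref{delta.inj}, \ref{dim.comp}, \ref{f.inj}). You would need to supply an argument of this kind; it uses the structure of $Y$ in an essential way and is not a consequence of your blow-up analysis.

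The second gap is the integration step, which you flag as hard but whose proposed resolution does not close. Knowing that the blow-up of $f$ is a Euclidean isometry at $\text{vol}$-a.e.\ point gives no control along a fixed curve, since the curve is one-dimensional and may spend positive time in the exceptional (merely measure-zero) set; and your fix --- replace $\gamma$ by a nearby curve $\gamma'$ meeting the bad set in $1$-dimensional measure zero --- runs into the lower semicontinuity of length: from $L(f\circ\gamma')\ge L(\gamma')$ you control $\liminf L(f\circ\gamma')$, while semicontinuity only gives $L(f\circ\gamma)\le\liminf L(f\circ\gamma')$, which is the unusable direction. (There is also the unresolved issue, which you note, of whether measure preservation descends to the blow-up limit $L$ at all.) The paper avoids both problems by never passing to an infinitesimal statement: Lemmas \ref{vol.2ball} and \ref{delta.lip} give a \emph{definite-scale} estimate $|f^{-1}(y_1)f^{-1}(y_2)|_X\le(1+\psi(\epsilon))|y_1y_2|_Y$ on $f(\mathcal R_{\epsilon,\delta}(X))$ via a two-ball volume comparison, and the final proof perturbs only the partition points of the image curve into the open dense set $f(\mathcal R_\epsilon(X))$, using the dimension comparison once more to guarantee that generic connecting geodesics stay in $f(\mathcal R_{2\epsilon}(X))$, and then sums the chord estimates. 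To salvage your route you would need either such a definite-scale bound or a genuinely new argument converting a.e.\ infinitesimal isometry into length preservation along arbitrary rectifiable curves.
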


Since $\Rnkv$ is compact in Gromov-Hausdorff topology, when $\vol X$ and $\vol Y$ are close to each other, an almost distance non-increasing map $f:X\to Y$ should be an almost isometry, that is, a Gromov-Hausdorff approximation. A map $f:X\to Y$ is called an $\epsilon$-Gromov-Hausdorff approximation if it is $\epsilon$-onto and $\epsilon$-isometry, that is,
\begin{align*}
  Y\subset B_\epsilon(f(X)) \text{\quad and \quad}
  \left||f(x)f(y)|_Y-|xy|_X\right|<\epsilon.
\end{align*}
It is not difficult to see that $d_{GH}(X,Y)\le 3\epsilon$ if there exists an $\epsilon$-Gromov-Hausdorff approximation $f:X\to Y$.


\begin{cor}
  For any $n,\kappa,v,D>0$ and $\epsilon>0$, there exists $\delta=\delta(n,\kappa,v,D,\epsilon)>0$ such that for any $X,Y\in \Lnkv$ that satisfies $\max\{\diam(X), \diam (Y)\}\le D$ and $|\vol X-\vol Y|<\delta$, a map $f:X\to Y$ is an $\epsilon$-Gromov-Hausdorff approximation if and only if it is $\delta$-distance non-increasing, that is, $|f(x)f(y)|_Y\le |xy|_X+\delta,$ for all $x,y\in X$.
\end{cor}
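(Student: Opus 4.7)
The plan is to argue by contradiction, combining the Gromov-Hausdorff compactness of the diameter-bounded subfamily of $\Lnkv$, the continuity of $\vol$ under Hausdorff convergence (Colding \cite{Col97}; Cheeger-Colding \cite{CC97-I}), and Theorem A. The direction ``$\epsilon$-GH approximation $\Rightarrow$ $\delta$-distance non-increasing'' is immediate for $\delta\ge\epsilon$, so the content lies in the converse.

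Assume the converse fails. Then for some $\epsilon>0$ there exist sequences $X_i,Y_i\in\Lnkv$ with $\diam X_i,\diam Y_i\le D$ and $|\vol {X_i}-\vol {Y_i}|\to 0$, together with maps $f_i\colon X_i\to Y_i$ that are $(1/i)$-distance non-increasing but not $\epsilon$-GH approximations. By GH-compactness I pass to subsequences with $X_i\to X_\infty$ and $Y_i\to Y_\infty$ in GH, both limits in $\Lnkv$. Volume continuity then gives $\vol {X_\infty}=\vol {Y_\infty}$. Since the $f_i$ are equicontinuous up to $1/i$, an Arzel\`a-Ascoli argument along the GH-convergence produces a subsequential limit $f_\infty\colon X_\infty\to Y_\infty$ that is $1$-Lipschitz.

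Granting momentarily that $\vol {f_\infty(X_\infty)}=\vol {X_\infty}$, Theorem A makes $f_\infty$ an isometry onto its image. Combined with $\vol {X_\infty}=\vol {Y_\infty}$ and the fact that any nonempty open subset of $Y_\infty\in\Lnkv$ carries positive $n$-dimensional Hausdorff measure (by Bishop-Gromov together with the lower bound $v$), this forces $f_\infty(X_\infty)=Y_\infty$, so $f_\infty$ is a global isometry. But this contradicts the failure of $f_i$ to be $\epsilon$-GH approximations: such failure passes to the limit as either a pair $x,y\in X_\infty$ with $|f_\infty(x)f_\infty(y)|\le|xy|-\epsilon$ or a point $y\in Y_\infty$ with $d(y,f_\infty(X_\infty))\ge\epsilon$, neither of which is compatible with a global isometry.

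The main obstacle will be verifying the volume equality $\vol {f_\infty(X_\infty)}=\vol {X_\infty}$. The inequality $\le$ is standard, since $f_\infty$ is $1$-Lipschitz and hence Hausdorff-measure contracting. The reverse bound is the delicate step, since pathological ``near-collapse'' maps (say, nearly constant $\delta$-distance non-increasing maps) must be excluded; the argument should combine the volume-closeness hypothesis $|\vol {X_i}-\vol {Y_i}|\to 0$ with the near-Lipschitz behavior of $f_i$ on mesoscopic scales (on a ball of radius $r\gg 1/i$, $f_i$ is at most $(1+(1/i)/r)$-Lipschitz) to force $\vol {f_i(X_i)}$ to approach both $\vol {X_i}$ and $\vol {Y_i}$. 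This is the step where the non-collapsed Bishop-Gromov geometry of $Y_i$ and the volume-closeness must be invoked in concert.
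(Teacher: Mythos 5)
The paper offers no proof of this corollary, so there is nothing to compare line by line; your compactness scheme (contradicting sequences, Gromov--Hausdorff limits, volume continuity, Arzel\`a--Ascoli, then Theorem A) is evidently the intended route, and every step of it is sound \emph{except} the one you yourself flag as ``the delicate step.'' That step is a genuine gap, and it cannot be closed from the stated hypotheses. The equality $\vol{f_\infty(X_\infty)}=\vol{X_\infty}$ simply does not follow from $\vol{X_\infty}=\vol{Y_\infty}$ together with $f_\infty$ being $1$-Lipschitz: take $X=Y$ (so the volume hypothesis holds with $\delta=0$) and let $f$ be a constant map. This $f$ is distance non-increasing, hence $\delta$-distance non-increasing for every $\delta>0$, yet it is not an $\epsilon$-Gromov--Hausdorff approximation for any $\epsilon<\diam(Y)$, since it is neither $\epsilon$-onto nor an $\epsilon$-isometry. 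The same example shows that your proposed repair --- using the mesoscopic near-Lipschitz control of $f_i$ and the closeness of $\vol{X_i}$ to $\vol{Y_i}$ to force $\vol{f_i(X_i)}\to\vol{X_i}$ --- cannot succeed: the constant maps satisfy all of those hypotheses while $\vol{f_i(X_i)}=0$. In other words, the obstruction you identified is not a technical difficulty but a counterexample to the corollary as literally stated.

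The statement becomes true, and your argument closes, once one adds an almost-surjectivity hypothesis, e.g.\ that $f$ is onto (or $\delta$-onto, $Y\subset B_\delta(f(X))$). Then the subsequential limit $f_\infty$ is surjective: given $y_\infty\in Y_\infty$, approximate by $y_i\in Y_i$, choose $x_i$ with $|f_i(x_i)y_i|\le\delta_i\to 0$, and pass to a convergent subsequence $x_i\to x_\infty$ to get $f_\infty(x_\infty)=y_\infty$. Surjectivity gives $\vol{f_\infty(X_\infty)}=\vol{Y_\infty}=\vol{X_\infty}$, Theorem A applies, $f_\infty$ is an isometry, and the persistence of the failure of the $\epsilon$-GH property in the limit (which you argue correctly) yields the contradiction. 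One further small point to watch: the forward implication needs $\delta\ge\epsilon$ while the converse needs $\delta$ below some threshold $\delta_0(\epsilon)$, so the ``if and only if'' with a single $\delta$ requires checking $\delta_0(\epsilon)\ge\epsilon$, or else stating the two implications with separate constants.
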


%

Theorem A fails for general length metric spaces, partially because a lemma of dimension control (Lemma \ref{dim.comp}) fails in these cases. For example, $Y$ can be the quotient space of $X$ with any lower dimensional subset identified as one point. See \cite{Li12} for more examples. A special case of Theorem A was proved by Bessi\`{e}res, Besson, Courtois, and Gallot in \cite{BBGG}. This was used to prove a stability theorem for certain manifolds with relatively minimum volume. Similar results and applications in Alexandrov geometry (c.f. \cite{BGP}) can be found in \cite{Li12}.

\bigskip

{\bf Conventions and notations}
\begin{itemize}
  \item $d_{GH}(X,Y)$: the Gromov-Hausdorff distance between $X$ and $Y$.
  \item $\dim_H(X)$: the Hausdorff dimension of $X$.
  \item $B_r^n(0)$: the $n$-dimensional Euclidean ball with radius $r$.
  \item $\mathcal R(X)$: the regular set in $X$, see \cite{CC97-I}.
  \item $\mathcal S(X)$: the singular set in $X$, see \cite{CC97-I}.
  \item $\geod{ab}_X$: a length minimizing geodesic connecting $a$ and $b$ in $X$.
  \item $|ab|_X$: the distance of $a$ and $b$ with respect to the intrinsic metric of $X$.
  \item $\psi(\epsilon)$: a function (could be different even in the same context) depending only on $n,\kappa,v,\epsilon$ that satisfies $\dsp\lim_{\epsilon\to 0}\psi(\epsilon)=0$.
\end{itemize}

\section{Lipschitz-Volume rigidity theorem}

Not losing generality, we assume that $f$ is onto and $\diam(Y)\le\diam(X)\le D$, since our proof only relies on the local structures. For simplicity, we only consider $X\in\mathcal M_\infty(n,-1,v)$. By the assumption, it's not hard to see that for any subset $A\subset Y$, $\vol A=\vol{f^{-1}(A)}$. One of the key step in our proof is to show that $f$ is injective (Lemma \ref{f.inj}). We first prove this for $f$ restricted to the regular part. Let $\mathcal R_{\epsilon,\delta}(X)=\{p\in X:\, d_{GH}(B_r(p),B_r^n(0))<\epsilon r \text{ for all $0<r<\delta$}\}$ and $\mathcal R_\epsilon(X)=\cup_\delta\mathcal {R}_{\epsilon,\delta}(X)$ be the $\epsilon$-regular set. By the volume continuity, we know that for any $x\in \mathcal R_{\epsilon,\delta}$ and $r<\delta$,
  \begin{align*}
    (1+\psi(\epsilon))\cdot\text{vol}(B_r(x))&=\vol{B_r^n(0)}
     =\vol{\mathbb S_1^{n-1}}\int_0^r t^{n-1}\,dt  \\
     &=2r\cdot\vol{B_r^{n-1}(0)}\int_{0}^{\frac\pi2}\sin^n(t)dt.
  \end{align*}

\begin{lem}\label{delta.img}
$f(\mathcal R_{\epsilon,\delta}(X))\subset \mathcal R_{\psi(\epsilon),\delta/10}(Y)$. Consequently, $f(\mathcal R_\epsilon(X))\subset \mathcal R_{\psi(\epsilon)}(Y)$ and $f(\mathcal R(X))\subset \mathcal R(Y)$.
\end{lem}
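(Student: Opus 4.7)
The plan is to transfer the volume information at $p$ to $q = f(p)$ using that $f$ is $1$-Lipschitz and measure-preserving, and then invert this via the quantitative Cheeger--Colding principle that almost-maximal volume at a ball forces Gromov--Hausdorff closeness to a Euclidean ball of the same radius. The factor $\delta/10$ in the conclusion should come from the standard scale buffer in that principle: a volume bound at scale $r$ only yields the GH estimate at scales strictly smaller than $r$.

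For $p \in \mathcal R_{\epsilon,\delta}(X)$, the displayed computation immediately preceding the lemma gives
\[
\vol{B_r(p)} \ge (1-\psi(\epsilon))\,\vol{B_r^n(0)}\qquad\text{for every } r < \delta.
\]
Since $f$ is $1$-Lipschitz one has $B_r(p) \subset f^{-1}(B_r(q))$, and since $f$ is measure-preserving in the sense $\vol A = \vol{f^{-1}(A)}$ (recorded in the paragraph following Theorem~A), I can conclude
\[
\vol{B_r(q)} = \vol{f^{-1}(B_r(q))} \ge \vol{B_r(p)} \ge (1-\psi(\epsilon))\,\vol{B_r^n(0)}
\]
for every $r < \delta$. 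Hence $q$ inherits almost-maximal Euclidean volume at every scale up to $\delta$.

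Next I would invoke Bishop--Gromov monotonicity to propagate this volume lower bound to every smaller scale $s < \delta$, using that for small $s$ the hyperbolic comparison volume is within a $(1+\psi)$ factor of $\vol{B_s^n(0)}$ so the loss is absorbed into $\psi$. The quantitative Cheeger--Colding volume-to-metric almost-rigidity theorem then upgrades this to $d_{GH}(B_s(q), B_s^n(0)) < \psi(\epsilon)\,s$ for every $s < \delta/10$, which is precisely $q \in \mathcal R_{\psi(\epsilon),\delta/10}(Y)$. The inclusions $f(\mathcal R_\epsilon(X)) \subset \mathcal R_{\psi(\epsilon)}(Y)$ and $f(\mathcal R(X)) \subset \mathcal R(Y)$ follow immediately: the first by taking a union over $\delta$, the second by noting that $p \in \mathcal R(X)$ means $p \in \mathcal R_\epsilon(X)$ for every $\epsilon > 0$ and $\psi(\epsilon) \to 0$.

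The main technical difficulty I anticipate is citing a clean quantitative form of the Cheeger--Colding almost-rigidity so that the error in the conclusion depends only on $n,\kappa,v,\epsilon$ (and not on $\delta$ beyond the $1/10$ buffer), and checking that the Bishop--Gromov step does not degrade the control. The $1$-Lipschitz and measure-preserving inputs are used in a single, essentially combinatorial step to transport the volume lower bound from $p$ to $q$; all the substantive geometric content sits on the Ricci-limit side.
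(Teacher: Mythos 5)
Your proposal is correct and follows essentially the same route as the paper: transfer the almost-maximal volume lower bound from $x$ to $f(x)$ via the inclusion $B_r(x)\subset f^{-1}(B_r(f(x)))$ together with measure preservation, then invoke Colding's almost maximal volume theorem \cite{Col96-1} to conclude $f(x)\in\mathcal R_{\psi(\epsilon),\delta/10}(Y)$. The paper simply applies this at the single scale $\delta/10$ rather than discussing propagation across all scales, but the substance is identical.
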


\begin{proof}
  Let $x\in \mathcal R_{\epsilon,\delta}(X)$ and $y=f(x)$. Apply the volume formula for $B_{\delta/10}(x)$. Because $f$ is volume preserving and $f^{-1}((B_{\delta/10}(y))\supseteq B_{\delta/10}(x)$, we have the following volume comparison:
  \begin{align*}
    \vol{B_{\delta/10}(y)}
    &= \vol{f^{-1}(B_{\delta/10}(y))}
    \\
    &\ge \vol{B_{\delta/10}(x)}
    =(1+\psi(\epsilon))\cdot\vol{B_{\delta/10}^n(0)}.
  \end{align*}
  By the almost maximum volume theorem \cite{Col96-1}, $y\in R_{\psi(\epsilon),\delta/10}(Y)$.
\end{proof}

Let $G_Y=\left\{y\in Y: f^{-1}(y) \text{\, has a cardinality of more than $1$}\right\}$ and $G_X=f^{-1}(G_Y)\subset X$.

\begin{lem}\label{delta.inj}
There is an $\epsilon=\epsilon(n,v)>0$ such that $\mathcal R_\epsilon(X)\cap G_X=\varnothing$. 

\end{lem}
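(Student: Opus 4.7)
My plan is to argue by contradiction using a volume packing estimate. Suppose some $x\in\mathcal R_{\epsilon,\delta}(X)$ satisfies $f(x)=f(x')=y$ for some $x'\ne x$; I want to choose $\epsilon=\epsilon(n,v)$ small enough that this cannot happen. Set $d=|xx'|_X>0$ and restrict attention to radii $r<\min(\delta,d/2)$, so that $B_r(x)$ and $B_r(x')$ are disjoint. Since $f$ is $1$-Lipschitz, both balls lie inside $f^{-1}(B_r(y))$, and the volume-preserving identity $\vol{f^{-1}(A)}=\vol{A}$ noted at the beginning of the section gives
\begin{align*}
  \vol{B_r(x)}+\vol{B_r(x')}\le \vol{B_r(y)}.
\end{align*}
The strategy will be to show that the left side strictly exceeds the right side for an appropriate choice of $\epsilon$ and $r$.

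Next I would estimate the three volumes separately. The first is controlled directly by the definition of $\mathcal R_{\epsilon,\delta}(X)$ combined with the volume-continuity formula displayed right before Lemma~\ref{delta.img}, yielding $\vol{B_r(x)}\ge (1-\psi(\epsilon))\vol{B_r^n(0)}$. For $\vol{B_r(y)}$, Bishop--Gromov volume comparison (which passes to non-collapsed GH-limits, hence is available on $Y\in\mathcal M_\infty(n,-1,v)$) gives $\vol{B_r(y)}\le v_{-1}(r)$, where $v_{-1}(r)$ denotes the volume of an $r$-ball in hyperbolic $n$-space; the ratio $v_{-1}(r)/\vol{B_r^n(0)}$ tends to $1$ as $r\to 0$. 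The essential term is $\vol{B_r(x')}$: since $x'$ need not be regular, I would apply Bishop--Gromov relative volume comparison at $x'$ starting from the definite unit-ball bound $\vol{B_1(x')}\ge v$ to extract a universal constant $c_0=c_0(n,v)>0$ for which $\vol{B_r(x')}\ge c_0\,\vol{B_r^n(0)}$ at all scales $r\le 1$. Substituting the three bounds into the previous display leaves $c_0\le \psi(\epsilon)+(v_{-1}(r)/\vol{B_r^n(0)}-1)$; first fixing $\epsilon=\epsilon(n,v)$ so small that $\psi(\epsilon)<c_0/3$ and then shrinking $r$ so that the hyperbolic-to-Euclidean ratio stays below $1+c_0/3$ produces $c_0\le 2c_0/3$, the desired contradiction.

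The step I expect to be the main obstacle is producing the uniform lower bound $\vol{B_r(x')}\ge c_0(n,v)\vol{B_r^n(0)}$ that is independent of $x'$ and does not deteriorate as $r$ shrinks. The second preimage $x'$ can be arbitrary and potentially very singular, so no almost-Euclidean behavior is available there; the argument genuinely depends on the Bishop--Gromov relative volume inequality remaining valid on non-collapsed Gromov--Hausdorff limits of manifolds with Ricci curvature bounded below, applied from the hypothesis $\vol{B_1(x')}\ge v$ built into membership in $\Lnkv$.
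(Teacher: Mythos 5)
Your proof is correct and follows essentially the same route as the paper's: assume two preimages of $y$, take disjoint small balls around them, use volume preservation to pack both into $B_r(y)$, bound the regular ball from below by an almost-Euclidean volume and the second ball from below by a definite fraction of $\vol{B_r^n(0)}$ via Bishop--Gromov from the non-collapsing hypothesis, and derive a contradiction as $r\to 0$. The only (harmless) deviation is that you bound $\vol{B_r(y)}$ from above by the absolute Bishop inequality rather than by invoking Lemma \ref{delta.img} to place $y$ in $\mathcal R_{\psi(\epsilon)}(Y)$, and you normalize the relative comparison at scale $1$ rather than $D$, which in fact matches the stated dependence $\epsilon=\epsilon(n,v)$ more cleanly.
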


\begin{proof}
  Argue by contradiction. Assume $x_1\in R_{\epsilon,\delta_0}(X)$, $x_2\in X$ and $f(x_1)=f(x_2)=y$. By Lemma \ref{delta.img}, $y\in R_{\psi(\epsilon),\delta_0/10}(Y)$. Let $0<\delta<\delta_0/10$ be small such that $B_\delta(x_1)\cap B_\delta(x_2)=\varnothing$. By the volume continuity, Bishop-Gromov Relative Volume Comparison holds on $X$ and $Y$. Thus we have
  \begin{align*}
    1&=\frac{\vol{f^{-1}(B_\delta(y))}}{\vol{B_\delta(y)}}
    \ge \frac{\vol{B_\delta(x_1)}+\vol{B_\delta(x_2)}}{\vol{B_\delta(y)}}
    \\
    &\ge \frac{\vol{B_\delta(x_1)} +v\cdot\frac{\int_0^\delta \sinh^{n-1}(t)\,dt}{\int_0^D \sinh^{n-1}(t)\,dt}} {\vol{B_\delta(y)}}
    \\
    &\ge \frac{(1+\psi(\epsilon))\cdot\vol{\mathbb S_1^{n-1}}\cdot \int_0^\delta t^{n-1}\,dt +v\cdot\frac{\int_0^\delta \sinh^{n-1}(t)\,dt}{\int_0^D \sinh^{n-1}(t)\,dt}} {(1+\psi(\epsilon))\cdot\vol{\mathbb S_1^{n-1}}\cdot \int_0^\delta t^{n-1}\,dt}.
  \end{align*}
  Let $\delta\to 0$, we get
  $$1\ge \frac{(1+\psi(\epsilon))\cdot\vol{\mathbb S_1^{n-1}}
    +\frac{v}{\int_0^D \sinh^{n-1}(t)\,dt}}
    {(1+\psi(\epsilon))\cdot\vol{\mathbb S_1^{n-1}}}.
  $$
  This is a contradiction for $\epsilon>0$ sufficiently small.
\end{proof}

In the next step, we prove that $f$ is almost isometry when restricted to the regular part. We need a volume formula for the union of two balls, witch follows by the volume continuity and direct computations in Euclidean space.

\begin{lem}\label{vol.2ball} For any $x_1, x_2\in \mathcal R_{\epsilon,\delta}$ with $|x_1x_2|\le 2r<\delta/5$,
        \begin{align*}
          &(1+\psi(\epsilon))\cdot
          \vol{B_r(x_1)\cup B_r(x_2)}
          \\
          &\qquad=\vol{B_r^n(0)} +2r\cdot\vol{B_r^{n-1}(0)}\int_{\theta}^{\frac\pi2}\sin^n(t)dt,
        \end{align*}
        where $\theta=\cos^{-1}\left(\frac{|x_1x_2|}{2r}\right)$.
\end{lem}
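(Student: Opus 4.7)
The plan is to combine an explicit calculation in $\mathbb{R}^n$ with the volume continuity theorem of Colding and Cheeger--Colding, using that $B_{\delta/2}(x_1)$ is quantitatively close to $B_{\delta/2}^n(0)$.

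First, I would verify the formula in $\mathbb{R}^n$. For $y_1,y_2\in\mathbb R^n$ with $d=|y_1y_2|\le 2r$, the intersection $B_r^n(y_1)\cap B_r^n(y_2)$ is symmetric about the perpendicular bisector of $[y_1,y_2]$, so its volume is twice that of the spherical cap of height $r-d/2$ cut out of $B_r^n(0)$. Slicing by hyperplanes orthogonal to $y_2-y_1$ and substituting $s=r\cos\phi$ gives
\begin{align*}
\vol{B_r^n(y_1)\cap B_r^n(y_2)}=2r\cdot\vol{B_r^{n-1}(0)}\int_0^\theta\sin^n\phi\,d\phi,
\end{align*}
with $\theta=\cos^{-1}(d/(2r))$. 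Combined with the identity $\vol{B_r^n(0)}=2r\cdot\vol{B_r^{n-1}(0)}\int_0^{\pi/2}\sin^n\phi\,d\phi$ recalled just before Lemma \ref{delta.img}, inclusion--exclusion produces the stated right-hand side exactly.

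Next, I would transfer the computation to $X$. Since $x_1\in\mathcal R_{\epsilon,\delta}$ and $B_r(x_1)\cup B_r(x_2)\subset B_{3r}(x_1)\subset B_{\delta/2}(x_1)$, there is a $\psi(\epsilon)r$-Gromov--Hausdorff approximation from $B_{\delta/2}(x_1)$ to $B_{\delta/2}^n(0)$ sending $x_1$ to the origin and $x_2$ to a point at distance $|x_1x_2|\pm\psi(\epsilon)r$ from the origin. Pulling back along a defining sequence $M_i\to X$ of manifolds in $\Rnkv$ with $x_j^i\to x_j$, the volume continuity theorems of \cite{Col97} and \cite{CC97-I} let me replace $\vol{B_r(x_1)\cup B_r(x_2)}$ by the corresponding Euclidean union volume up to the multiplicative factor $(1+\psi(\epsilon))$; since the Euclidean formula depends continuously on the inter-center distance, the $\psi(\epsilon)r$ error in locating the images of $x_1,x_2$ only contributes a further $\psi(\epsilon)\vol{B_r^n(0)}$ to the error, which is absorbed into the same multiplicative factor.

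The main obstacle is that volume continuity is classically stated for balls rather than for unions or intersections. I would address this by inclusion--exclusion,
\begin{align*}
\vol{B_r(x_1)\cup B_r(x_2)}=\vol{B_r(x_1)}+\vol{B_r(x_2)}-\vol{B_r(x_1)\cap B_r(x_2)},
\end{align*}
using the single-ball formula from the excerpt for the first two terms, and estimating $\vol{B_r(x_1)\cap B_r(x_2)}$ by sandwiching it between the GH-preimages of slightly enlarged and slightly shrunken Euclidean intersections. The tubular error, namely the volume of the $\psi(\epsilon)r$-annulus around the Euclidean boundary, is at most $\psi(\epsilon)\vol{B_r^n(0)}$ by Bishop--Gromov together with the regular-ball formula, and so fits inside the final multiplicative $(1+\psi(\epsilon))$ factor, completing the proof.
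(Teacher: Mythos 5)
Your proposal is correct and follows exactly the route the paper intends: the paper states Lemma \ref{vol.2ball} without proof, saying only that it ``follows by the volume continuity and direct computations in Euclidean space,'' and your cap-slicing computation plus the inclusion--exclusion/sandwiching transfer is precisely that argument spelled out. One small repair: to get a GH error of size $\psi(\epsilon)r$ (rather than $\psi(\epsilon)\delta$, which would swamp the inter-center distance when $r\ll\delta$), apply the defining property of $\mathcal R_{\epsilon,\delta}$ at scale $3r<\delta$, i.e.\ to $B_{3r}(x_1)$, instead of to $B_{\delta/2}(x_1)$.
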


Now we can prove that $f|_{\mathcal R_{\epsilon,\delta}}$ is locally almost isometry and the proof is similar as in \cite{Li12}. We include it here for the convenience to the readers.

\begin{lem}\label{delta.lip} There are $\epsilon,\delta>0$ sufficiently small so that if $y_1, y_2\in f(\mathcal R_{\epsilon,\delta}(X))$ with $|y_1y_2|_Y<\delta/20$, then
$$|f^{-1}(y_1) f^{-1}(y_2)|_X<(1+\psi(\epsilon))\cdot|y_1y_2|_Y.$$
\end{lem}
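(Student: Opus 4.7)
The plan is to combine Lemma \ref{vol.2ball} with the volume inequality coming from the $1$-Lipschitz, volume-preserving nature of $f$, and to perform a dichotomy on $|x_1 x_2|_X$, where $x_i := f^{-1}(y_i)$. By Lemma \ref{delta.inj}, for $\epsilon$ small enough $f$ is injective on $\mathcal R_\epsilon(X)$, so the $x_i \in \mathcal R_{\epsilon,\delta}(X)$ are uniquely determined, and by Lemma \ref{delta.img} the $y_i$ lie in $\mathcal R_{\psi(\epsilon),\delta/10}(Y)$. Fix a parameter $\eta = \eta(\epsilon)>0$ to be chosen below, and set $r := (1+\eta)|y_1 y_2|_Y/2$; the hypothesis $|y_1 y_2|_Y<\delta/20$ keeps $r$ in the regime where both Lemma \ref{vol.2ball} (for $(y_1,y_2)$) and the single-ball volume identity (for $x_1,x_2$) are applicable. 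Since $B_r(x_i)\subseteq f^{-1}(B_r(y_i))$ and $f$ preserves volume,
\[
\vol{B_r(x_1)\cup B_r(x_2)}\ \le\ \vol{B_r(y_1)\cup B_r(y_2)}. \qquad(\ast)
\]

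Now split on the value of $|x_1x_2|_X$. If $|x_1x_2|_X\le 2r$, then $|x_1x_2|_X\le(1+\eta)|y_1y_2|_Y$ is already the desired inequality once $\eta$ is absorbed into $\psi(\epsilon)$. Otherwise $|x_1x_2|_X>2r$, so $B_r(x_1)$ and $B_r(x_2)$ are disjoint and each has volume at least $(1-\psi(\epsilon))\vol{B_r^n(0)}$; combined with Lemma \ref{vol.2ball} applied to $(y_1,y_2)$ at angle $\theta_Y=\cos^{-1}(1/(1+\eta))$, the inequality $(\ast)$ becomes
\[
(2-\psi(\epsilon))\vol{B_r^n(0)}\ \le\ \vol{B_r^n(0)}+2r\vol{B_r^{n-1}(0)}\int_{\theta_Y}^{\pi/2}\sin^n t\,dt.
\]
Using the identity $\vol{B_r^n(0)}=2r\vol{B_r^{n-1}(0)}\int_0^{\pi/2}\sin^n t\,dt$, this reduces to $\int_0^{\theta_Y}\sin^n t\,dt\le\psi(\epsilon)\int_0^{\pi/2}\sin^n t\,dt$, which forces $\theta_Y\le C_n\psi(\epsilon)^{1/(n+1)}$. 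But by construction $\theta_Y\sim\sqrt{2\eta}$, so any choice of $\eta=\eta(\epsilon)$ satisfying $\eta\gg\psi(\epsilon)^{2/(n+1)}$ and $\eta\to 0$ as $\epsilon\to 0$ produces a contradiction. The disjoint case is thereby ruled out, leaving the first case and yielding $|x_1x_2|_X\le(1+\psi(\epsilon))|y_1y_2|_Y$.

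The main obstacle is the bookkeeping of the several $\psi(\epsilon)$ factors arising from the almost-Euclidean volume identities and from Lemma \ref{vol.2ball}, together with picking $\eta=\eta(\epsilon)$ that simultaneously excludes the disjoint-ball scenario and is small enough to be absorbed into the final $\psi(\epsilon)$ in the target estimate. Geometrically, the argument says that if the preimages $x_1,x_2$ were farther apart than $y_1,y_2$, the two nearly-disjoint Euclidean balls of radius $r$ around them would occupy strictly more volume than the overlapping union $B_r(y_1)\cup B_r(y_2)$ permits, contradicting $(\ast)$.
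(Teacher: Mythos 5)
Your proposal is correct and follows essentially the same route as the paper: both compare $\vol{B_r(x_1)\cup B_r(x_2)}$ (two disjoint, almost-Euclidean balls) against $\vol{B_r(y_1)\cup B_r(y_2)}$ via Lemma \ref{vol.2ball} and the inclusion $B_r(x_1)\cup B_r(x_2)\subseteq f^{-1}\left(B_r(y_1)\cup B_r(y_2)\right)$, then extract the conclusion from the resulting bound $\int_0^{\theta}\sin^n t\,dt\le\psi(\epsilon)\int_0^{\pi/2}\sin^n t\,dt$. The only difference is organizational: the paper takes $r=\frac12|x_1x_2|_X$ (after a preliminary claim that $|x_1x_2|_X\le 2|y_1y_2|_Y$ keeps $r$ in range) and reads off $\lambda=1/\cos\theta<1+\psi(\epsilon)$ directly, while you inflate $r$ by a parameter $\eta(\epsilon)$ and rule out disjointness of the preimage balls --- both amount to the same volume contradiction.
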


\begin{proof}
  Let $|f^{-1}(y_1)f^{-1}(y_2)|_X=\lambda\cdot|y_1y_2|_Y$.
  Consider the metric balls $B_r(y_1)$ and $B_r(y_2)$. Take $r=\frac12\lambda\cdot|y_1y_2|_Y$ and assume that $r<\delta/10$. By
  the volume formula in Lemma \ref{vol.2ball},
\begin{align*}
  &(1+\psi(\epsilon))\cdot\vol{B_r(y_1)\cup B_r(y_2)}
  \\
  &\qquad=\vol{B_r^n(0)}
  +2r\cdot\vol{B_r^{n-1}(0)}
  \int_{\theta}^{\pi/2}\sin^n (t)\,dt
  \\
  &\qquad=2r\cdot\vol{B_r^{n-1}(0)}
  \int_{0}^{\pi/2}\sin^n (t)\,dt
  +2r\cdot\vol{B_r^{n-1}(0)}
  \int_{\theta}^{\pi/2}\sin^n (t)\,dt,
\end{align*}
where $\theta=\cos^{-1}\left(\frac{|y_1y_2|_Y}{2r}\right) =\cos^{-1}\left(1/\lambda\right)$.
Note that $B_r(f^{-1}(y_1))\cap B_r(f^{-1}(y_2))=\varnothing$. We have
\begin{align*}
  &(1+\psi(\epsilon))\cdot\vol{B_r(f^{-1}(y_1))\cup B_r(f^{-1}(y_2))}
  \\
  &\qquad=2\vol{B_r^n(0)}
  =4r\cdot\vol{B_r^{n-1}(0)}
  \int_{0}^{\pi/2}\sin^n (t)\,dt.
\end{align*}
Because $f$ is 1-Lipschitz, we have $f^{-1}(B_r(y_1)
\cup B_r(y_2))\supseteq B_r
(f^{-1}(y_1))\cup B_r(f^{-1}(y_2))$. Together with that $f$ is volume preserving, we get
\begin{align}
  1&=\frac{\vol{f^{-1}(B_r(y_1)\cup B_r(y_2))}}
  {\vol{B_r(y_1)\cup B_r(y_2)}}
  \geq\frac{\vol{B_r(f^{-1}(y_1))\cup B_r(f^{-1}(y_2))}}
  {\vol{B_r(y_1)\cup B_r(y_2)}}
  \notag\\
  &=(1-\psi(\epsilon))\frac{2\int_{0}^{\pi/2}\sin^n (t)\,dt}
    {\int_{0}^{\pi/2}\sin^n (t)\,dt
    +\int_{\theta}^{\pi/2}\sin^n (t)\,dt}.
  \label{delta.lip.e1}
\end{align}
We claim that $\lambda\le 2$. If this is not true, we repeat the above calculation with $|f^{-1}(y_1)f^{-1}(y_2)|_X>2|y_1y_2|_Y$ and $r=|y_1y_2|_Y$. In this case $\theta=\frac\pi3$, which yields a contraction when $\epsilon$ is small. Once the claim is proved, the assumption $r<\delta/10$ automatically holds and then inequality (\ref{delta.lip.e1}) follows for all $|y_1y_2|_Y<\delta/20$.  This implies that $0<\theta<\psi(\epsilon)$ and thus $\lambda=\frac1{\cos\theta}<1+\psi(\epsilon)$.
\end{proof}

To prove that $f$ almost preserves the length of path for any curve $\gamma\subset \mathcal R_\epsilon(X)$, we need the existence of $\delta_0>0$ so that $\gamma\subset \mathcal R_{\epsilon,\delta_0}(X)$. Note that $\mathcal R_{\epsilon,\delta}(X)$ may not be open, but by the continuity of volume and the rigidity of almost maximal volume, we get that for any $\epsilon,\delta>0$ small, there is $\epsilon_1=\psi(\epsilon_1)<\epsilon$ so that $\mathcal R_{\epsilon,\delta}(X)\subset \overset{\circ}{\mathcal R}_{\epsilon_1,\delta/2}(X)$. Thus
\begin{equation*}
  \mathcal R_{\epsilon}(X)
  =\cup_\delta \mathcal R_{\epsilon,\delta}(X)
  \subset \cup_\delta\overset{\circ}{\mathcal R}_{\epsilon_1,\delta/2}(X).
\end{equation*}
If a compact set $A\subset \mathcal R_\epsilon(X)$, then there is $\delta_0>0$ such that $A\subset\overset{\circ}{\mathcal R}_{\psi(\epsilon),\delta_0}(X)$. The following lemma is a direct consequence of Lemmas \ref{delta.img} -- \ref{delta.lip}.

\begin{lem}[Almost Isometry over $\mathcal R_{\epsilon}(X)$]\label{delta.almost.iso} There is $\epsilon>0$ small so that the following holds.
  \begin{enumerate}
    \item If $\geod{pq}_Y\subset f(\mathcal R_\epsilon(X))$, then $\gamma=f^{-1}(\geod{ab}_Y)$, parameterized by arc length, is a Lipschitz curve with $$L(\gamma|_{[t_1,t_2]})< (1+\psi(\epsilon))\cdot|\gamma(t_1)\gamma(t_2)|_{\mathcal R_\epsilon(X)}.$$
    \item $f|_{\mathcal R_\epsilon(X)}$ is $(1+\psi(\epsilon))$-Lipschitz. In particular, if geodesic $\geod{f(a)f(b)}_Y\subset f(\mathcal R_\epsilon(X))$, then
        \begin{equation*}
          1\le\frac{|ab|_X}{|f(a)f(b)|_Y}< 1+\psi(\epsilon).
        \end{equation*}
    \item $f(\mathcal R_\epsilon(X))\subseteq Y^{\psi(\epsilon)}$ is open and dense in $Y$.
  \end{enumerate}
\end{lem}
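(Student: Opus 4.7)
\medskip
\noindent\textbf{Proof proposal.}
The plan is to combine the two ingredients already in hand: injectivity of $f$ on $\mathcal R_\epsilon(X)$ (Lemma~\ref{delta.inj}) and the local inverse-Lipschitz estimate $|f^{-1}(y_1)f^{-1}(y_2)|_X<(1+\psi(\epsilon))\,|y_1y_2|_Y$ at scale $\delta/20$ (Lemma~\ref{delta.lip}), together with the remark preceding the lemma that any compact $A\subset\mathcal R_\epsilon(X)$ is contained in $\overset{\circ}{\mathcal R}_{\psi(\epsilon),\delta_0}(X)$ for a uniform $\delta_0>0$. This last point is what promotes Lemma~\ref{delta.lip} from a scale-dependent pointwise statement to a uniform almost-isometry on the compact sets where the three assertions play out.

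For (1), since $[pq]_Y\subset f(\mathcal R_\epsilon(X))$ and $f$ is injective on $\mathcal R_\epsilon(X)$ (Lemma~\ref{delta.inj}), the preimage $\gamma:=f^{-1}([pq]_Y)$ is a well-defined subset of $\mathcal R_\epsilon(X)$, and Lemma~\ref{delta.lip} already guarantees $f^{-1}$ is locally $(1+\psi(\epsilon))$-Lipschitz, so $\gamma$ is a rectifiable curve and can be parametrized by arclength. I would first use compactness of $[pq]_Y$ and the remark to produce a single $\delta_0>0$ with $\gamma\subset\overset{\circ}{\mathcal R}_{\psi(\epsilon),\delta_0}(X)$. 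Then, for each $t_1<t_2$, partition the geodesic arc $[f\gamma(t_1),f\gamma(t_2)]\subset[pq]_Y$ into consecutive pieces of length below $\delta_0/20$, apply Lemma~\ref{delta.lip} to each piece, and sum to obtain
\begin{equation*}
L(\gamma|_{[t_1,t_2]})\le(1+\psi(\epsilon))\,|f\gamma(t_1)f\gamma(t_2)|_Y\le(1+\psi(\epsilon))\,|\gamma(t_1)\gamma(t_2)|_{\mathcal R_\epsilon(X)},
\end{equation*}
where the last step uses that $f$ is $1$-Lipschitz and that the intrinsic metric of $\mathcal R_\epsilon(X)$ dominates the ambient one. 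The ``in particular'' part of (2) then follows by specializing (1) to $p=f(a)$, $q=f(b)$: the preimage curve joins $a$ to $b$ in $X$, so $|ab|_X\le L(\gamma)<(1+\psi(\epsilon))\,|f(a)f(b)|_Y$, and combining with the $1$-Lipschitz inequality $|f(a)f(b)|_Y\le|ab|_X$ yields the two-sided bound.

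For (3), the inclusion $f(\mathcal R_\epsilon(X))\subseteq Y^{\psi(\epsilon)}$ is exactly Lemma~\ref{delta.img}. Density is easy: $\mathcal R(X)$ is dense in $X$ (standard for $\mathcal M_\infty(n,\kappa,v)$), $f$ is continuous and onto, and $\mathcal R(X)\subset\mathcal R_\epsilon(X)$, so $f(\mathcal R(X))\subset f(\mathcal R_\epsilon(X))$ is already dense in $Y$. \textbf{The main obstacle is openness.} The natural strategy: given $y=f(x)$ with $x\in\mathcal R_{\epsilon,\delta}(X)$, the remark produces $r>0$ with $B_r(x)\subset\mathcal R_\epsilon(X)$, and Lemma~\ref{delta.lip} (applied via~(2)) yields that $f|_{B_r(x)}$ is a $(1+\psi(\epsilon))$-bi-Lipschitz embedding into $Y$; one then needs to argue that its image is a neighborhood of $y$. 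For this promotion I would either invoke topological invariance of domain, using that $y\in\mathcal R_{\psi(\epsilon),\delta/10}(Y)$ admits a topological $n$-ball neighborhood by Cheeger--Colding, or argue by volumes: $\vol{f(B_r(x))}=\vol{B_r(x)}\approx\vol{B_r^n(0)}\approx\vol{B_r(y)}$ together with $f(B_r(x))\subset B_r(y)$ forces $f(B_r(x))$ to exhaust $B_r(y)$ up to a null set, which combined with the almost-Euclidean structure on both sides yields a genuine $Y$-neighborhood of $y$ contained in $f(\mathcal R_\epsilon(X))$.
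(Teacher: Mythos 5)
The paper gives no written proof of this lemma---it is recorded as ``a direct consequence of Lemmas \ref{delta.img}--\ref{delta.lip}'' together with the preceding remark that a compact subset of $\mathcal R_\epsilon(X)$ lies in $\overset{\circ}{\mathcal R}_{\psi(\epsilon),\delta_0}(X)$ for a uniform $\delta_0>0$---and your proposal fills in the details along exactly those lines: uniform scale from the remark, subdivision of the geodesic plus Lemma \ref{delta.lip} for (1) and the ``in particular'' of (2), and Lemma \ref{delta.img} plus density of $\mathcal R(X)$ for the inclusion and density in (3). So the approach matches and the argument is essentially correct. The one point to be careful about is openness in (3): of your two suggested routes, only the invariance-of-domain route closes the argument, and it does so precisely because the remark places a whole neighborhood of $x$ (and, via Lemma \ref{delta.img}, of $y=f(x)$) inside a set of uniformly $\psi(\epsilon)$-regular points at a fixed scale, to which the Cheeger--Colding Reifenberg theorem (Appendix A of \cite{CC97-I}) applies to give topological $n$-ball charts on both sides. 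The volume alternative as you state it does not work: since $x$ and $y$ are only $\epsilon$-regular, the comparison yields $\vol{f(B_r(x))}\ge(1-\psi(\epsilon))\vol{B_r(y)}$ rather than equality up to a null set, and a compact subset of $B_r(y)$ of almost full measure need not contain any neighborhood of $y$ (its complement can be a dense open set of small measure). Also note that, as with the paper's own bookkeeping, what one actually obtains is that each point of $f(\mathcal R_\epsilon(X))$ is interior to $f(\mathcal R_{\psi(\epsilon)}(X))$; this loss of a $\psi(\epsilon)$ is harmless for the way the lemma is used later.
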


Now we can prove that $f$ is injective with the following Dimension comparison lemma (compare to \cite{Li12} for the Alexandrov case), which is a direct consequence of volume convergence and Lemma 3.1 in \cite{CC00-II}.

\begin{lem}[Dimension comparison]\label{dim.comp}
  Let $\Omega_0\subseteq X\in\Lnkv$ be a subset with $\vol{\Omega_0}>0$ and $p\in X$ be a fixed point. For each point $x\in \Omega_0$, select one point $\bar x$ on a geodesic $\geod{px}_X$. Let $\Omega$ be the collection of the $\bar x$s for all $x\in\Omega_0$. If $d(p,\Omega)>0$, then
  $$\dim_H(\Omega)\ge n-1.$$
\end{lem}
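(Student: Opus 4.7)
The plan is to radially project $\Omega$ onto a distance sphere about $p$ and show that this projection has positive $(n-1)$-dimensional Hausdorff measure via a volume disintegration applied to $\Omega_0$.

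Set $r_0 = d(p, \Omega) > 0$ and fix $0 < a < r_0$. For each $x \in \Omega_0$, let $\phi(x)$ denote the point on the chosen geodesic $\geod{px}_X$ at distance $a$ from $p$; this defines a map $\phi \colon \Omega_0 \to S_a(p) := \{q \in X : |pq|_X = a\}$. Since $\bar x$ lies on $\geod{px}_X$ with $|p\bar x| \ge r_0 > a$, the truncation of $\geod{px}_X$ at $\bar x$ is a geodesic from $p$ to $\bar x$ and passes through $\phi(x)$. Partitioning $\Omega_0$ into countably many pieces indexed by dyadic ranges of $|px|$ and $|p\bar x|$ and retaining a piece of positive volume, I may further assume $|px| \in [R, 2R]$ and $|p\bar x| \in [a_0, 2a_0]$ for some $R \ge a_0 \ge r_0$.

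I then claim $\mathcal H^{n-1}(\phi(\Omega_0)) > 0$. Indeed, Lemma 3.1 of \cite{CC00-II} combined with the non-collapsed volume convergence in \cite{CC97-I} yields a disintegration of $\vol{\Omega_0}$ as an integral over the angular shadow $\phi(\Omega_0)$ against the model radial Jacobian $\sinh^{n-1}(t)$. Because this Jacobian is bounded on the annulus $\{q : R \le |pq| \le 2R\}$, we obtain an estimate of the form $\vol{\Omega_0} \le C(n, \kappa, R) \cdot \mathcal H^{n-1}(\phi(\Omega_0))$, and since $\vol{\Omega_0} > 0$ this forces $\mathcal H^{n-1}(\phi(\Omega_0)) > 0$, so $\dim_H(\phi(\Omega_0)) \ge n-1$.

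To transfer the bound to $\Omega$, define $\pi \colon \Omega \to S_a(p)$ by letting $\pi(\bar x)$ be the point at distance $a$ on the truncation of the geodesic $\geod{px}_X$ used in selecting $\bar x$; by construction $\pi(\bar x) = \phi(x)$, so $\pi(\Omega) \supseteq \phi(\Omega_0)$. Radial projection in the non-collapsed Ricci limit restricted to the annulus $\{a_0 \le |pq| \le 2a_0\}$ is Lipschitz, with constant controlled by $\sinh(a)/\sinh(a_0)$ via Bishop--Gromov comparison on radial tubes, and Lipschitz maps do not increase Hausdorff dimension. Hence $\dim_H(\Omega) \ge \dim_H(\pi(\Omega)) \ge \dim_H(\phi(\Omega_0)) \ge n-1$.

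The main obstacle is justifying the polar disintegration on a Ricci limit, where the distance function $d_p$ is only Lipschitz and no classical exponential chart at $p$ is available. This is exactly what Lemma 3.1 of \cite{CC00-II} provides: a measure-theoretic substitute for polar coordinates which, when combined with the non-collapsed volume convergence of \cite{CC97-I}, upgrades to the Jacobian estimate above and hence to the lower bound on the $(n-1)$-Hausdorff measure of the angular shadow.
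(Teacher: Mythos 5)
The paper offers no written proof of this lemma (it is asserted as ``a direct consequence of volume convergence and Lemma 3.1 in \cite{CC00-II}''), so your attempt must be measured against the argument that citation is pointing at. Your proposal has a genuine gap, and it sits exactly at the point where Ricci lower bounds differ from Alexandrov lower bounds: the claim that the radial projection $\pi\colon\Omega\to S_a(p)$ (sliding $\bar x$ down the geodesic $\geod{px}_X$ to radius $a$) is Lipschitz ``with constant controlled by $\sinh(a)/\sinh(a_0)$ via Bishop--Gromov comparison on radial tubes.'' Bishop--Gromov controls the determinant of the radial Jacobian, i.e.\ volumes of radial sets; it gives no pointwise control on the distance between two geodesics emanating from $p$. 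Indeed radial projection toward $p$ fails to be Lipschitz even on the round sphere (two points $\epsilon$-close to the antipode of $p$ can project to nearly antipodal points of the equator), and under a mere Ricci lower bound there is no Toponogov-type comparison to rule this out. Since your entire transfer from $\dim_H(\phi(\Omega_0))\ge n-1$ to $\dim_H(\Omega)\ge n-1$ rests on this Lipschitz claim, the proof does not close. The first half has a related weakness: the inequality $\vol{\Omega_0}\le C\,\mathcal H^{n-1}(\phi(\Omega_0))$ is not what a polar disintegration gives directly on a limit space (the sphere $S_a(p)$ and the map $\phi$ carry no a priori regularity), and justifying it would require the same kind of pointwise radial control that is unavailable.

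The argument the citation supports avoids projections entirely and works by covering $\Omega$ itself. Suppose $\mathcal H^{n-1}(\Omega)=0$ (after the harmless dyadic reduction you already perform, so that $|px|\in[R,2R]$ and $|p\bar x|\ge r_0$), and cover $\Omega$ by balls $B_{r_i}(z_i)$ with $\sum_i r_i^{n-1}<\eta$ and $|pz_i|\ge r_0/2$. Every $x\in\Omega_0$ has its chosen geodesic $\geod{px}_X$ passing through some $B_{r_i}(z_i)$, so $\Omega_0\subseteq\bigcup_i W_i$ where $W_i$ is the set of points of the annulus whose geodesic to $p$ meets $B_{r_i}(z_i)$. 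The radial volume comparison of Lemma 3.1 in \cite{CC00-II}, combined with the Bishop--Gromov upper bound $\vol{B_{2r_i}(z_i)}\le C r_i^{\,n}$ and the volume convergence of \cite{CC97-I} to pass these estimates to the limit space, yields $\vol{W_i}\le C(n,\kappa,v,r_0,R)\,r_i^{\,n-1}$ (morally: the tube of directions hitting $B_{r_i}(z_i)$ carries angular measure $\lesssim r_i^{\,n-1}$, read off from the volume of the ball divided by its radial thickness $r_i$). Summing gives $0<\vol{\Omega_0}\le C\eta$, a contradiction as $\eta\to 0$. You should restructure your proof along these lines; the measure-theoretic shadow estimate is the substitute for the Lipschitz projection, not a means of establishing it.
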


\begin{lem}\label{f.inj}
  $f\colon X\to Y$ is injective.
\end{lem}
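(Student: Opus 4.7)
The plan is to argue by contradiction. Suppose $x_1\ne x_2$ in $X$ satisfy $f(x_1)=f(x_2)=y_0$, and set $d=|x_1x_2|_X>0$. The strategy is to produce a subset of $G_X$ of Hausdorff dimension at least $n-1$, conflicting with an upper bound on $\dim_H G_X$ that I establish first. Note that the proof of Lemma~\ref{delta.inj} works for every sufficiently small $\epsilon>0$, so $G_X\cap\mathcal R_\epsilon(X)=\varnothing$ for every such $\epsilon$. Since $\mathcal R(X)\subset\mathcal R_\epsilon(X)$ for every $\epsilon$, this forces $G_X\subset X\setminus\mathcal R(X)=\mathcal S(X)$, and the Cheeger--Colding codimension bound from \cite{CC00-II} gives $\dim_H G_X\le\dim_H\mathcal S(X)\le n-2$.

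For the lower bound I would apply Lemma~\ref{dim.comp} with $p=x_1$ and $\Omega_0=B_r(x_2)$, where $r>0$ is small enough that $B_r(x_1)\cap B_r(x_2)=\varnothing$; Bishop--Gromov guarantees $\vol{\Omega_0}>0$. For each $x\in\Omega_0$ choose a point $\bar x$ on a minimizing geodesic $\geod{x_1x}_X$ at a fixed distance $a\in(0,d-r)$ from $x_1$. Since $d(x_1,\Omega)=a>0$, Lemma~\ref{dim.comp} yields $\dim_H\Omega\ge n-1$. Because $\dim_H\mathcal S(X)\le n-2$, I can discard the singular part of $\Omega$ without decreasing its Hausdorff dimension, so that the resulting $(n-1)$-dimensional subset lies in $\mathcal R(X)\subset\mathcal R_\epsilon(X)$ for any fixed small $\epsilon$.

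The main obstacle is to show that this regular portion of $\Omega$ sits inside $G_X$, for then $n-1\le\dim_H G_X\le n-2$ is the sought contradiction. Here I would invoke Lemma~\ref{delta.almost.iso}: for $\bar x\in\Omega\cap\mathcal R_\epsilon(X)$, the interior of the minimizing geodesic $\geod{y_0\,f(\bar x)}_Y$ lies in the open dense set $f(\mathcal R_\epsilon(X))$, so by Lemma~\ref{delta.almost.iso}(1) it lifts uniquely to a Lipschitz curve in $\mathcal R_\epsilon(X)$ ending at $\bar x$; extended maximally backward, this lift must terminate at a singular preimage of $y_0$, necessarily $x_1$ or $x_2$. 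Running the symmetric construction with $p=x_2$ and $\Omega_0'=B_r(x_1)$ produces a companion set $\Omega'$ with $\dim_H\Omega'\ge n-1$ at distance $a$ from $x_2$, hence disjoint from $\Omega$ when $a<d/2$ and $r$ is small. I expect the $1$-Lipschitz and volume-preserving properties of $f$, together with the almost-isometry of Lemma~\ref{delta.almost.iso}, to force the two lifting procedures to pair up: for most $\bar x\in\Omega$ there will be a $\bar x'\in\Omega'$ distinct from $\bar x$ with $f(\bar x)=f(\bar x')$, placing $\bar x\in G_X$. This pairing step, rather than the dimension counting, is where the delicate almost-isometric structure developed in Lemmas~\ref{delta.img}--\ref{delta.almost.iso} is critically used.
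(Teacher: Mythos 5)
There is a genuine gap, and in fact the pivotal step of your plan is inconsistent with a lemma you yourself invoke. You restrict $\Omega$ to its regular part, so the points $\bar x$ under consideration lie in $\mathcal R(X)\subset\mathcal R_\epsilon(X)$, and you then aim to show that ``most'' such $\bar x$ belong to $G_X$. But Lemma \ref{delta.inj} (which you correctly used to get $G_X\subset\mathcal S(X)$) says precisely that $\mathcal R_\epsilon(X)\cap G_X=\varnothing$: no regular point can have a fiber of cardinality greater than one. So the pairing you hope for is not merely unproven --- it is false, and no amount of work with Lemma \ref{delta.almost.iso} can produce it. Relatedly, if you could place even a single regular point in $G_X$ you would already contradict Lemma \ref{delta.inj} outright, so your dimension count $n-1\le\dim_H G_X\le n-2$ would be superfluous; this is a symptom of the fact that the large set one needs to exhibit cannot be a subset of $G_X$.

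The paper locates the large set elsewhere and works in $Y$ rather than in $X$. Take $q_1\in\mathcal R(X)$ with $|qq_1|_X<\eta$, set $a_1=f(q_1)$ and $\Omega_0=f\left(B_\eta(p)\cap\mathcal R(X)\right)$, which has positive volume since $f$ preserves volume. For $y\in\Omega_0$ with $x=f^{-1}(y)$, the distance $|ya_1|_Y\le|xp|_X+|qq_1|_X\le 2\eta$ is small \emph{because} $f(p)=f(q)$, whereas $|xq_1|_X\ge|pq|_X-2\eta$ is bounded below; so if the geodesic $\geod{ya_1}_Y$ stayed inside $f(\mathcal R_\epsilon(X))$, Lemma \ref{delta.almost.iso} would force $|ya_1|_Y\ge(1-\psi(\epsilon))(|pq|_X-2\eta)$, a contradiction for small $\epsilon,\eta$. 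Hence every such geodesic meets $Y\setminus f(\mathcal R_\epsilon(X))=f(X\setminus\mathcal R_\epsilon(X))$, and choosing one such point per $y$ yields, via Lemma \ref{dim.comp} applied in $Y$ with base point $a_1$ (the condition $d(a_1,\Omega)>0$ holding because $a_1\in\mathcal R(Y)$), a set of Hausdorff dimension at least $n-1$ inside $f(X\setminus\mathcal R_\epsilon(X))$. Since $f$ is $1$-Lipschitz and $X\setminus\mathcal R_\epsilon(X)\subset\mathcal S(X)$, this contradicts $\dim_H\mathcal S(X)\le n-2$. The essential point your proposal misses is that the contradiction comes from short geodesics in $Y$ joining the images of the two far-apart neighborhoods $B_\eta(p)$ and $B_\eta(q)$: it is the collapse of distance under $f$, not a putative spreading of $G_X$, that generates the $(n-1)$-dimensional obstruction.
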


\begin{proof}
  Assume $G_X\neq\varnothing$. Let $p,q\in G_X$ such that $f(p)=f(q)=a\in G_Y$. We will show that there exists $\epsilon>0$ such that $\dim_H(f(X\setminus\mathcal R_{\epsilon}(X)))\ge n-1$.
  Then $$\dim_H(\mathcal S(X))\ge \dim_H(X\setminus\mathcal R_{\epsilon}(X))
  \ge \dim_H(f(X\setminus\mathcal R_{\epsilon}(X)))\ge n-1,$$
  This contradicts to the fact that $\dim_H(\mathcal S(X))\le n-2$ proved in \cite{CC97-I}.

  By Lemma \ref{delta.inj}, let $\epsilon>0$ be small such that $\mathcal R_{\epsilon}\cap G_X=\varnothing$. Then $f(X\setminus\mathcal R_{\epsilon}(X))=Y\setminus f(\mathcal R_{\epsilon}(X))$ and $f(\mathcal R_{\epsilon}(X))\cap G_Y=\varnothing$. We shall show that
  \begin{align}
  \dim_H\left(Y\setminus f(\mathcal R_{\epsilon}(X))\right)\ge n-1.
  \label{f.inj.e1}
  \end{align}

  For any $\eta>0$ small, there is $q_1\in\mathcal R(X)$ with $|qq_1|_X<\eta$. Let $\Omega_0=f\left(B_{\eta}(p)\cap\mathcal R(X)\right)$. Because $f$ is volume preserving, we have \begin{align}\vol{\Omega_0}>0.\label{f.inj.e1.1}\end{align}

  Let $a_1=f(q_1)$. We claim that for any $y\in\Omega_0$, $\geod{ya_1}_Y\setminus f(\mathcal R_\epsilon(X))\neq\varnothing$. If not so, then $\geod{ya_1}_Y\subset f(\mathcal R_\epsilon(X))$. Let $x=f^{-1}(y)\in B_{\eta}(p)$. By Lemma \ref{delta.almost.iso},
  \begin{align}
    |ya_1|_Y&=(1-\psi(\epsilon))|xq_1|_X\notag
    \\
    &\ge (1-\psi(\epsilon))(|pq|_X-|px|_X-|qq_1|_X)
    \ge (1-\psi(\epsilon))(|pq|_X-2\eta).
    \label{f.inj.e2}
  \end{align}
  On the other hand,
  \begin{align} |ya_1|_Y\le |ya|_Y+|aa_1|_Y\le |xp|_X+|qq_1|_X\le 2\eta. \label{f.inj.e3}\end{align}
  This leads to a contradiction to (\ref{f.inj.e2}) for $\epsilon, \eta>0$ small.

  Now for each $y\in\Omega_0$, take $\bar y\in\geod{ya_1}\setminus f(\mathcal R_\epsilon(X))$. Let $\Omega$ be the collection of $\bar y$ for all $y\in\Omega_0$. Clearly $\Omega\subseteq Y\setminus f(\mathcal R_{\epsilon}(X))$. Note that $a_1=f(q_1)\in\mathcal R(Y)$ due to Lemma \ref{delta.img}. By Theorem A.1.5 in \cite{CC97-I}, $d(a_1,\Omega)\ge c(\epsilon)>0$. Together with (\ref{f.inj.e1.1}) and Lemma \ref{dim.comp}, we get that
  $$\dim_H\left(Y\setminus f(\mathcal R_{\epsilon}(X))\right)\ge\dim_H(\Omega)\ge n-1.$$
\end{proof}

\begin{proof}[{\bf Proof of Theorem A}]
  We first show that for any $a\in f(\mathcal R_\epsilon(X))$, $y\in Y$ and any $r>0$, there is $y'\in B_r(y)$ such that $\geod{ay'}_Y\subset f(\mathcal R_{2\epsilon}(X))$. If this is not true, then $\geod{ay'}_Y\setminus f(\mathcal R_{2\epsilon}(X))\neq\varnothing$ for any $y'\in B_r(y)$. Let $\bar y\in \geod{ay'}_Y\setminus f(\mathcal R_{2\epsilon}(X))$ and $\Omega$ be the collection of these $\bar ys$ for all $y'\in B_r(y)$. Clealy, $\Omega\subseteq f(\mathcal S(X))$ and $\dim_H(\Omega)\le n-2$. By the volume convergence of small Euclidean balls, we see that there is a constant $c=c(\epsilon)$ such that $|a\bar y|\ge c(\epsilon)$. By Lemma \ref{dim.comp}, we get $\dim_H(\Omega)\ge n-1$, a contradiction.

  Let $\gamma_X:[0,1]\to X$ be a Lipschitz curve and $\gamma_Y=f(\gamma_X)$. It's sufficient to show that $L(\gamma_Y)\ge L(\gamma_X)$. Let $\{y_i\}_{i=0}^N$ be a partition of $\gamma_Y$ with $\max\{|y_1y_{i+1}|\}\to 0$ as $N\to\infty$. For any $\epsilon>0$, let $\epsilon_1=\psi(\epsilon)>0$ be selected as in Lemma \ref{delta.almost.iso}. Take $y_0'\in B_{y_0}(\epsilon/N)\cap f(\mathcal R_{\epsilon_1/2^N}(X))\neq\varnothing$. Select $y_i'\in B_{y_i}(\epsilon/N)$ recursively such that $\geod{y_i'y_{i+1}'}_Y\subset f(\mathcal R_{\epsilon_1/2^{N-(i+1)}}(X))$ for $i=1,2,\dots,N-1$.

  Let $x_i=f^{-1}(y_i')$, $i=0,1,\dots,N$. Applying Lemma \ref{delta.almost.iso} to $\geod{y_i'y_{i+1}'}_Y\subset f(\mathcal R_{\epsilon_1/2^{N-(i+1)}}(X))\subseteq f(\mathcal R_{\epsilon_1}(X))$ we get
  \begin{align*}
    L(\gamma_Y)
    &\ge \sum_{i=0}^{N-1}|y_iy_{i+1}|_Y
    \ge\sum_{i=0}^{N-1}\left(|y_i'y_{i+1}'|_Y-\frac{2\epsilon}{N}\right)
    \ge \sum_{i=0}^{N-1}|y_i'y_{i+1}'|_Y-2\epsilon
    \\
    &\ge (1-\psi(\epsilon))\sum_{i=0}^{N-1}|x_ix_{i+1}|_X-2\epsilon.
  \end{align*}
  Let $N\to\infty$ with that $\cup_i\geod{y_iy_{i+1}}_Y$ converges to $\gamma_Y$, because $f$ is injective, $\cup_i\geod{x_ix_{i+1}}_X$ converges to $\gamma_X$. Thus taking $N\to\infty$ and $\epsilon\to 0$, we get
  $$L(\gamma_Y)\ge
    \liminf_{\substack{N\to\infty\\\epsilon\to 0}}
    \sum_{i=0}^{N-1}|x_ix_{i+1}|_X
    \ge L(\gamma_X).
  $$
\end{proof}


%

\vskip 30mm

\bibliographystyle{amsalpha}


\end{document}